\theoremstyle{plain}
\newtheorem{thm}{Theorem}[section]
\newtheorem{lem}[thm]{Lemma}
\newtheorem{prop}[thm]{Proposition}
\theoremstyle{definition}
\newtheorem{defn}{Definition}[section]
\newtheorem{rem}{Remark}[section]
\numberwithin{equation}{section}
\newcounter{knot@strings}
\newif\ifknot@draftmode
\tikzset{
  knot/draft mode/.is if=knot@draftmode
}
\newenvironment{knot}[1][]{%
  \tikzset{#1}%
  \setcounter{knot@strings}{0}}{%
  \foreach \knot@str in {1,...,\the\value{knot@strings}} {
    \expandafter\expandafter\expandafter\draw\expandafter\expandafter\expandafter[\csname knot@string@opts@\knot@str\endcsname,use knot path=\csname knot@string@\knot@str\endcsname] (0,0);
    \ifknot@draftmode
    \begingroup
    \let\pgfsyssoftpath@movetotoken=\pgfqpoint
    \let\pgfsyssoftpath@linetotoken=\pgfqpoint
    \let\pgfsyssoftpath@curvetotoken=\pgfqpoint
    \let\pgfsyssoftpath@curvetosupportatoken=\pgfqpoint
    \let\pgfsyssoftpath@curvetosupportbtoken=\pgfqpoint
    \csname knot@string@\knot@str\endcsname
    \global\pgf@xa=\pgf@x
    \global\pgf@ya=\pgf@y
    \endgroup
    \node[circle,fill=white,fill opacity=.5] at (\pgf@xa,\pgf@ya) {\knot@str};
\fi
  }
  \pgfmathtruncatemacro{\knot@stam}{\the\value{knot@strings}-1}
  \foreach \knot@sta in {1,...,\knot@stam} {
    \pgfmathtruncatemacro{\knot@stap}{\knot@sta + 1}
    \foreach \knot@stb in {\knot@stap,...,\the\value{knot@strings}} {
      \pgfintersectionofpaths{\expandafter\pgfsetpath\csname knot@string@\knot@sta\endcsname}{\expandafter\pgfsetpath\csname knot@string@\knot@stb\endcsname}
    \foreach \intsect in {1,...,\pgfintersectionsolutions} {
      \pgfpointintersectionsolution{\intsect}
      \pgfgetlastxy{\intsectx}{\intsecty}
        \ifknot@draftmode
        \node[circle,fill=white,fill opacity=.5] at (\intsectx,\intsecty)         {\knot@sta-\knot@stb-\intsect};
\else
\@ifundefined{knot@crossing@\knot@sta-\knot@stb-\intsect}{
}{
\pgfmathtruncatemacro{\knot@under}{\csname knot@crossing@\knot@sta-\knot@stb-\intsect\endcsname == \knot@sta ? \knot@stb : \knot@sta}
\expandafter\let\expandafter\knot@over\csname knot@crossing@\knot@sta-\knot@stb-\intsect\endcsname
\pgfscope
\clip (\intsectx,\intsecty) circle[radius=10pt];
    \expandafter\expandafter\expandafter\draw\expandafter\expandafter\expandafter[\csname knot@string@opts@\knot@under\endcsname,use knot path=\csname knot@string@\knot@under\endcsname] (0,0);
    \expandafter\expandafter\expandafter\draw\expandafter\expandafter\expandafter[\csname knot@string@opts@\knot@over\endcsname,use knot path=\csname knot@string@\knot@over\endcsname,white,line width=3\pgflinewidth] (0,0);
    \expandafter\expandafter\expandafter\draw\expandafter\expandafter\expandafter[\csname knot@string@opts@\knot@over\endcsname,use knot path=\csname knot@string@\knot@over\endcsname] (0,0);
\endpgfscope
}
\fi
      }
    }
  }
}
\newcommand{\strand}[1][]{%
  \stepcounter{knot@strings}%
  \expandafter\def\csname knot@string@opts@\the\value{knot@strings}\endcsname{#1}%
\path[save knot path=\csname knot@string@\the\value{knot@strings}\endcsname]}
\newcommand{\crossing}[2]{%
\expandafter\def\csname knot@crossing@#1\endcsname{#2}}
\begin{document}

\title[]{$B_3$ BLOCK REPRESENTATIONS OF DIMENSION 6 AND BRAID REVERSIONS}

\author{Taher I. Mayassi \and Mohammad N. Abdulrahim }

\address{Taher I. Mayassi\\
         Department of Mathematics and Computer Science\\
         Beirut Arab University\\
        P.O. Box 11-5020, Beirut, Lebanon}
\email{tim187@student.bau.edu.lb}

\address{Mohammad N. Abdulrahim\\
         Department of Mathematics and Computer Science\\
         Beirut Arab University\\
         P.O. Box 11-5020, Beirut, Lebanon}
\email{mna@bau.edu.lb}

\begin{abstract}
We construct a family of six dimensional block representations of the braid group $B_3$ on three strings. 
We show that some of these representations can be used to separate braids from their reversed braids of some known knots and others of 9 and 10 crossings.

\end{abstract}

\maketitle

\medskip

\renewcommand{\thefootnote}{}
\footnote{\textit{Key words and phrases.} Braid groups, knots, invertible}
\footnote{\textit{Mathematics Subject Classification.} Primary: 20F36.}
\vskip 0.1in 

\section{Introduction}
\bigskip
Gauss was the first mathematician who studied knots mathematically in the 1800s.
Reidemeister and Alexander (around 1930), were able to make significant progress in knot
theory, which  has been a very dynamic branch of topology especially after the discovery
of the Jones polynomial in 1984 and its connections with quantum field theory, as well as
some concrete applications in the study of enzymes acting on DNA strands  \cite{Mur}.\\
The reverse of an oriented knot $K$ is defined as the same knot with the opposite
orientation. Vertibility seems to be very difficult to detect.
The connection between knot theory and braid theory was discovered in
1923 by Alexander. He proved that every oriented knot or link is isotopic to a closed
braid \cite{Ban}.\\
In \cite{Lie}, Lieven Le Bruyn introduced some simple representations of the braid group $B_3$ that are able to separate the braids of the following knots from their reversed braids, $6_3$, $7_5$, $8_7$, $8_9$, $8_{10}$ and $8_{17}$ which is the first
non-invertible kont with minimal number of crossings. All these knots have at most 8 crossings and
are closures of 3-string braids. 
The braid $b=\sigma_1^{-2}\sigma_2\sigma_1^{-1}\sigma_2\sigma_1^{-1}\sigma_2^2$
is the braid whose closure is the knot $8_{17}$, and the braid $b'=\sigma_2^2\sigma_1^{-1}\sigma_2\sigma_1^{-1}\sigma_2\sigma_1^{-2}$
is the reversed braid of $b$.
It turns out  that the trace of the braid $b$ is different from the trace of the reversed braid $b'$ 
for sufficiently large $B_3$-representations. Bruce Westbury discovered 12-dimensional 
representations of $B_3$ that are able to detect a braid from its reversed braid \cite{We}.\\
In section 2,  we state some essential definitions and theorems.
In section 3, we present some basic results about detecting vertibility and separating braids of some 
knots from their reversed braids using representations of $B_3$, the braid group on three strings.
In \cite{Lie}, Lieven Le Bruyn was able to detect reversions 
using simple representations of $B_3$. In fact, the author in \cite{Lie} discovered a 6-dimensional 
representation of $B_3$ which separates a braid of the knot $8_{17}$ from its reversed braid. The 
knots inspected by Lieven were $6_3,7_5,8_7,8_9,8_{10}$, and $8_{17}$ which is the non-invertible knot with minimal number of crossings.
In section 4, we construct a family of block representations of $B_3$ of dimension 6. 
In section 5, we prove that some complex specializations of the block representations, constructed in 
section 4, are able to separate braids from their reversed braids of the above knots, in addition to 
some knots of 9 and 10 crossings like $9_{6}$, $9_{9}$ and $10_{5}$.
Note that, even if 3-braid $\alpha$ is not conjugate to the reversed braid $\alpha'$, this does not 
mean that the closure of $\alpha$ is non-invertible. In fact, $8_{17}$ is the only non-invertible
knot with minimal number of crossings, and of which the braid coming from the knot is separated
from its reversed braid. In order to apply Theorem 2.2, section 2, to show 
that the closure of 3-braid $\alpha$ is non-invertible, one needs to show (i) that $\alpha$ is not 
conjugate to $\alpha'$ and (ii) that $\alpha$ is not conjugate to certain type of braids (flypes).
In this paper, section 5, we work on the separation of braids from their reversed braids and
we construct a table of some knots with their braids listed there.
In addition to separating braids with at most 8 crossings from their reversed braids, we extend such results to include some knots of 9 and 10 crossings as $9_6$, $9_9$ and $10_5$.

\bigskip
\section{Definitions and theorems}
\medskip
\begin{defn} \cite{Bir}
The braid group on $n$ strings,  $B_n$, is the abstract group with the presentation
$$B_n=\langle \sigma_1,\cdots,\sigma_{n-1} | \sigma_i\sigma_j=\sigma_j\sigma_i\text{ for }|i-j|>1\text{ and }\sigma_i\sigma_{i+1}\sigma_i=\sigma_{i+1}\sigma_{i}\sigma_{i+1}\\
\text{ for }i=1,\cdots,n-2\rangle.$$
\end{defn}

\begin{defn}\cite{Cro}
A knot $K$ is the image of a homeomorphism of a unit circle $S^1$ into $\mathbb{R}^3$ considered up to continuous deformations (ambient isotopies) in the following sense.\\
Two knots $K_1$ and $K_2$ are equivalent (isotopic) if there exists a continuous mapping $H:\mathbb{R}^3\times[0,1]\to\mathbb{R}^3$ such that
\begin{enumerate}
\item For every $t\in[0,1]$ the mapping $x\mapsto H(x,t)$ is a homeomorphism of $\mathbb{R}^3$ onto $\mathbb{R}^3$
\item $H(x,0)=x$ for all $x\in\mathbb{R}^3$.
\item $H(K_1,1)=K_2$ 
\end{enumerate} 
Such mapping $H$ is called ambient isotopy.
\end{defn} 

\begin{defn}
A link is a finite union of pairwise disjoint knots, which are called the components of the link.
\end{defn} 

A closed braid is a braid in which the corresponding ends of its strings are connected in pairs. This means that every braid can be closed up to be a knot or a link. Now, we have the following theorem.
\begin{thm}Alexander's Theorem\cite{Ban} 
Every knot or link can be represented as a closed braid.
\end{thm}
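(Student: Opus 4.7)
My plan is to follow the classical braiding algorithm due to Alexander, phrased in terms of oriented link diagrams in the plane. Given an oriented link $L$, I would first fix a regular oriented diagram $D$ for $L$ in the plane $\mathbb{R}^2$, and choose a point $O \in \mathbb{R}^2$ disjoint from $D$ which will serve as the braid axis (viewed end-on). Relative to $O$, every oriented edge of $D$ is either \emph{positive} (i.e.\ it turns counterclockwise around $O$ when traversed in the given orientation) or \emph{negative}. The goal of the proof is to modify $D$, by a finite sequence of ambient isotopies of the link, so that every edge of the resulting diagram is positive with respect to $O$; then the diagram automatically presents a closed braid whose axis projects to $O$.

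The key mechanism, which I expect to be the main technical step, is the \emph{Alexander trick} for removing a single negative edge. If $e$ is a negative edge of $D$, I would subdivide $e$ into sufficiently small subarcs so that each subarc is either entirely disjoint from the other strands of $D$ or meets them transversally in a controlled way. For each such subarc $e'$, I would then replace $e'$ by the complementary arc that goes around $O$ on the opposite side, pushing this new arc either entirely above or entirely below the rest of the diagram. The result is an ambient isotopy of $L$ that converts the negative subarc $e'$ into a collection of positive subarcs, at the cost of introducing new crossings between those subarcs and the strands that were crossed.

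Next I would verify termination. Here I would use a weight function such as the number of negative edges of $D$ (or, more carefully, the sum of absolute values of winding contributions of negative edges): each application of the Alexander trick to a minimal negative arc strictly decreases this quantity, so the procedure ends after finitely many steps. In the resulting diagram $D'$, every edge is positive with respect to $O$, so the radial projection from $O$ gives a monotone parametrization of each strand; the diagram $D'$ therefore sits on a braided form around $O$, and closing it up along the axis recovers $L$ up to isotopy.

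The step I expect to be the most delicate is ensuring that the Alexander trick can always be carried out inside a neighborhood disjoint from the rest of the diagram except at transverse crossings, and that it does not inadvertently create new negative arcs; this requires a careful choice of the replacement arc and of its under/over behavior at the crossings it introduces. Once this local step is justified, the remainder of the argument is a straightforward induction on the weight function described above.
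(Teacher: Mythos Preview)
Your outline is a correct sketch of the classical Alexander argument (choose an axis point, eliminate ``wrongly oriented'' arcs by throwing them across the axis, and terminate by induction on the number of such arcs), and the only genuinely delicate points are exactly the ones you flag. However, there is nothing to compare against: in this paper Alexander's theorem is merely \emph{stated} with a citation to \cite{Ban} and is not proved. The authors use it only as background to justify that knots can be studied via braid words, so no proof, not even a sketch, appears in the paper. Your proposal therefore goes well beyond what the paper itself supplies.
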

\begin{figure}[h]

\hfil\epsfig{file=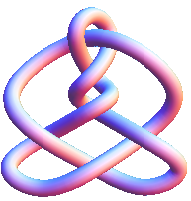, width=1.25in}\hfil
\caption{\label{fig} Knot $7_5$}
\vspace{.7cm}
\resizebox{10cm}{2cm}{
\begin{tikzpicture}

\begin{knot}
\strand[red,ultra thick]  (0,0) .. controls +(1,0) and +(-1,0) .. ++(2,0) ..controls +(1,0) and +(-1,0) .. ++(2,0) ..controls +(1,0) and +(-1,0) .. ++(2,0) ..controls +(1,0) and +(-1,0) .. ++(2,0) .. controls +(1,0) and +(-1,0) .. ++(2,-1) .. controls +(1,0) and +(-1,0) .. ++(2,-1) .. controls +(1,0) and +(-1,0) .. ++(2,0).. controls +(1,0) and +(-1,0) .. ++(2,0);

\strand[blue,ultra thick] (0,-1) .. controls +(1,0) and +(-1,0) .. ++(2,-1) .. controls +(1,0) and +(-1,0) .. ++(2,1).. controls +(1,0) and +(-1,0) .. ++(2,-1).. controls +(1,0) and +(-1,0) .. ++(2,1).. controls +(1,0) and +(-1,0) .. ++(2,1).. controls +(1,0) and +(-1,0) .. ++(2,0).. controls +(1,0) and +(-1,0) .. ++(2,-1) .. controls +(1,0) and +(-1,0) .. ++(2,1);

\strand[violet,ultra thick] (0,-2) .. controls +(1,0) and +(-1,0) .. ++(2,1) .. controls +(1,0) and +(-1,0) .. ++(2,-1).. controls +(1,0) and +(-1,0) .. ++(2,1).. controls +(1,0) and +(-1,0) .. ++(2,-1).. controls +(1,0) and +(-1,0) .. ++(2,0).. controls +(1,0) and +(-1,0) .. ++(2,1).. controls +(1,0) and +(-1,0) .. ++(2,1).. controls +(1,0) and +(-1,0) .. ++(2,-1);

\crossing{2-3-1}{2}
\crossing{2-3-2}{3}
\crossing{2-3-3}{2}
\crossing{2-3-4}{3}
\crossing{1-2-1}{1}
\crossing{1-3-1}{3}
\crossing{2-3-5}{2}
\crossing{2-3-6}{3}
\end{knot}
\end{tikzpicture}
}
\caption{Representative braid of $7_5$: $\sigma_1^{4}\sigma_2\sigma_1^{-1}\sigma_2^2$ }

\end{figure}
Every knot or Link may be closure of many braids even with different number of strings. However, Markov's theorem gives necessary and sufficient conditions for the closures of two braids to give equivalent knots or links \cite{Ban}. One of the sufficient conditions is conjugation. That is, if two braids are conjugate then their closures are equivalent links. For example, the braids $\sigma_1^{-1}\sigma_2\sigma_1^{-3}\sigma_2^3$ and $\sigma_2^3\sigma_1^{-1}\sigma_2\sigma_1^{-3}$ are associated with the same knot $8_9$.

\begin{defn}\cite{Fra}
The minimal number of strings needed in braid to represent a knot or link $K$ is called the braid index of $K$.
\end{defn}
\begin{defn}
The reverse of a braid of the form $\sigma_1^{n_1}\sigma_2^{m_1}\sigma_1^{n_2}\sigma_2^{m_2}\cdots\sigma_1^{n_k}\sigma_2^{m_k}$ is the braid $\sigma_2^{m_k}\sigma_1^{n_k}\cdots\sigma_2^{m_2}\sigma_1^{n_2}\sigma_2^{m_1}\sigma_1^{n_1},$ where $n_1,m_1,\cdots, n_k,m_k$ are integers.
\end{defn}
\begin{defn}\cite{Cro}
A knot is said to be invertible if it can be deformed continuously to itself, but with the orientation reversed.
\end{defn}
Before stating the next theorem, we need these definitions. 
\begin{defn}\cite{BirM}
A knot of braid index 3 is said to admit a flype  if its associated braids are conjugate to a braid of the form$$\sigma_1^a\sigma_2^b\sigma_1^c\sigma_2^{\epsilon}$$ for some integers $a,b,c$, $\epsilon=\pm1$.
\end{defn}

\begin{figure}[h]
\resizebox{10cm}{2cm}{
\begin{tikzpicture}

\begin{knot}
\strand[red,ultra thick]  (0,0) .. controls +(1,0) and +(-1,0) .. ++(2,0) ..controls +(1,0) and +(-1,0) .. ++(2,-1) ..controls +(1,0) and +(-1,0) .. ++(2,1) ..controls +(1,0) and +(-1,0) .. ++(2,0) .. controls +(1,0) and +(-1,0) .. ++(2,0) .. controls +(1,0) and +(-1,0) .. ++(2,-1);

\strand[blue,ultra thick] (0,-1) .. controls +(1,0) and +(-1,0) .. ++(2,-1) .. controls +(1,0) and +(-1,0) .. ++(2,0).. controls +(1,0) and +(-1,0) .. ++(2,0).. controls +(1,0) and +(-1,0) .. ++(2,1).. controls +(1,0) and +(-1,0) .. ++(2,-1).. controls +(1,0) and +(-1,0) .. ++(2,0);

\strand[violet,ultra thick] (0,-2) .. controls +(1,0) and +(-1,0) .. ++(2,1) .. controls +(1,0) and +(-1,0) .. ++(2,1).. controls +(1,0) and +(-1,0) .. ++(2,-1).. controls +(1,0) and +(-1,0) .. ++(2,-1).. controls +(1,0) and +(-1,0) .. ++(2,1).. controls +(1,0) and +(-1,0) .. ++(2,1);

\crossing{1-3-1}{1}
\crossing{1-3-2}{3}
\crossing{1-3-3}{1}
\crossing{2-3-1}{3}
\crossing{2-3-2}{2}
\crossing{2-3-3}{3}

\end{knot}

\end{tikzpicture}
}
\caption{Representative braid of $6_3$: $\sigma_1^{-1}\sigma_2^2\sigma_1^{-2}\sigma_2$}
\end{figure}
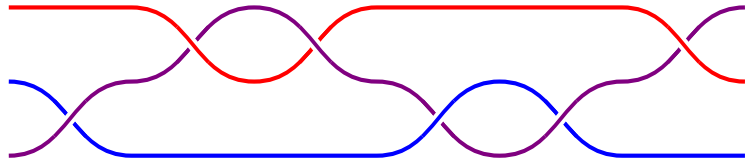

\begin{defn}
A flype is said to be non-degenerate when its associated braid $\sigma_1^a\sigma_2^b\sigma_1^c\sigma_2^{\epsilon}$ and its reverse $\sigma_2^{\epsilon}\sigma_1^c\sigma_2^b\sigma_1^a$ are in distinct conjugacy classes. 
\end{defn}

\begin{thm}\cite{BirM}\label{th1}
Let $\mathcal{K}$ be a link of braid index 3 with oriented 3-braid representative $K$. Then $\mathcal{K}$ is non-invertible if and only if $K$ and its reverse braid $K'$  are in distinct conjugacy classes, and $K$ does not contain a representative which admits a non-degenerate flype.
\end{thm}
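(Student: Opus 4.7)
My plan is to deduce Theorem~\ref{th1} directly from the Birman--Menasco classification of 3-braid links, which states that two 3-braids have isotopic unoriented closures precisely when they are either conjugate in $B_3$ or one is conjugate to a flype-form $\sigma_1^a\sigma_2^b\sigma_1^c\sigma_2^\epsilon$ whose word-reversal is conjugate to the other. Once that deep input is granted, everything reduces to careful bookkeeping of orientations when passing between braids and their closures.

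The first preparatory step is to establish a bridge observation: for any 3-braid $\beta$, the closure $\widehat{\beta'}$ of the reversed braid $\beta'$ is isotopic, as an oriented link, to $\hat\beta$ with every component's orientation reversed. This follows by realizing the word-reversal anti-automorphism of $B_3$ as the vertical reflection of the braid diagram, which flips the direction of travel along each strand. Consequently $\mathcal{K}$ is invertible if and only if $\hat K = \hat{K'}$ as oriented links. The forward implication is then immediate: if $\mathcal{K}$ is non-invertible then $\hat K \neq \hat{K'}$, so $K \not\sim K'$; and $K$ cannot admit a non-degenerate flype representative $\gamma$, since the flype move would give $\hat\gamma = \hat{\gamma'}$ as oriented links, while $K \sim \gamma$ yields $K' \sim \gamma'$ by applying word-reversal, forcing $\hat K = \hat{K'}$.

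For the converse, I would assume $\mathcal{K}$ is invertible, so that $\hat K = \hat{K'}$ as unoriented links. The Birman--Menasco dichotomy then yields either $K \sim K'$, which contradicts the first hypothesis, or a flype-form $\gamma = \sigma_1^a\sigma_2^b\sigma_1^c\sigma_2^\epsilon$ with $K \sim \gamma$ and $K' \sim \gamma'$. In the latter case, if the flype is degenerate then $\gamma \sim \gamma'$ and hence $K \sim K'$, reducing to the previous situation; otherwise $\gamma$ is by definition a non-degenerate flype representative of $K$, contradicting the second hypothesis.

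The principal obstacle throughout is the oriented-versus-unoriented subtlety: one must verify both that the flype move respects component orientations on the closure (needed in the forward direction) and that Birman--Menasco's single-flype normal form survives in this oriented setting. For 3-braids this is a careful diagrammatic check rather than a deep argument, but it is where any error in the write-up would most likely appear, especially for multi-component closures where the orientation-reversal can act selectively on different components.
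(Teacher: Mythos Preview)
The paper does not prove Theorem~\ref{th1} at all; it is quoted verbatim from Birman--Menasco \cite{BirM} as a black-box input, with no argument supplied. There is therefore no ``paper's own proof'' to compare against.

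That said, your sketch is essentially the correct way to extract the statement from the main classification theorem of \cite{BirM}, and it is in substance how the result is obtained there. Two small points. First, you phrase the Birman--Menasco dichotomy as a statement about \emph{unoriented} closures, but their classification is for oriented links of braid index~3; this is actually what you need in the converse direction (since invertibility of $\mathcal K$ gives $\hat K=\hat{K'}$ as \emph{oriented} links), so the argument works once the wording is fixed. Second, in the forward direction your claim that a flype representative forces $\hat\gamma=\hat{\gamma'}$ as oriented links relies on the identification of the Birman--Menasco flype partner of $\gamma=\sigma_1^a\sigma_2^b\sigma_1^c\sigma_2^\epsilon$ with (a conjugate of) $\gamma'$; this is true---cyclically permuting $\gamma'=\sigma_2^\epsilon\sigma_1^c\sigma_2^b\sigma_1^a$ gives $\sigma_1^a\sigma_2^\epsilon\sigma_1^c\sigma_2^b$, which is exactly the flype of $\gamma$---but you should say so explicitly rather than leave it implicit in the phrase ``the flype move.'' With those clarifications the outline is sound.
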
 
\bigskip 

\section{Basic results}
\medskip

There is an infinite family of non-invertible knots, \cite{Tro}. The knot $8_{17}$, which is the closure of the braid $\sigma_1^{-1}\sigma_2\sigma_1^{-1}\sigma_2^2\sigma_1^{-2}\sigma_2$, is the unique non-invertible knot with a minimal number of crossings. The following table gives the numbers of non-invertible and invertible knots according to their number of crossings up to 16 \cite{Mur}.

\begin{center}
\begin{tabular}{|>{\centering\arraybackslash}p{1.5cm}|> {}c|c|c|c|c|c|c|c|c|c|c|c|c|c|}
\hline 
Number of crossings & $3$ & $4$ & $5$ & $6$ & $7$ & $8$ & $9$ & $10$ & $11$ & $12$ & $13$ & $14$ & $15$ & $16$\\ 
\hline 
Non-invertible & $0$ & $0$ & $0$ & $0$ & $0$ & $1$ & $2$ & $33$ & $187$ & $1144$ & $6919$ & $38118$ & $226581$ &$1309875$ \\ 
\hline 
Invertible & $1$ & $1$ & $2$ & $3$ & $7$ & $20$ & $47$ & $132$ & $365$ & $1032$ & $3069$ & $8854$ & $267121$ &$78830$\\ 
\hline 
\end{tabular} 
\end{center}
\vspace{.5cm}
Notice that some non-invertible knots, referred to in the table above, are: $8_{17}$, $9_{32}$, $9_{33}$, $10_{67}$, $10_{80}$, $10_{81}$, $10_{83}$, for more details see \cite{Mur}.\\
Imre Tuba and Hans Wenzl introduced a complete classification of all simple $B_3$-representations of dimension $\leq5$ \cite{Tub}. We easily check that none of these representations can detect invertibility.\\
Bruce Westbury found a representation of dimension 12 that is able to detect a braid from its reversed braid by taking traces. The question, which was raised after that, was about determining the minimal dimension of a $B_3$-representation which detects knot vertibility.\\
Lieven Le Bruyn proposed a general method to solve the separation problems for three string braids
\cite{Lie}. In fact, he succeeded to solve Westbury’s separation problem using simple representations of $B_3$ of dimension 6. A specific representation of $B_3$ is given by the matrices
  
$$\sigma_1= \begin{pmatrix}
p+1&p-1&p-1&p-1&-p+1&-p+1\\
-2p-1&-1&-2p-1&2p+1&-2p-1&2p+1\\
p+2&p+2&-p&-p-2&-p-2&p+2\\
-p-2&-3p&p+2&-p+2&3p&-p-2\\
p-1&-p+1&3p+3&-p+1&3p+1&-3p-3\\
-3&-2p-1&2p+1&3&2p+1&-2p-3\\
\end{pmatrix},$$\\
$$\sigma_2= \begin{pmatrix}
p+1&p-1&p-1&-p+1&p-1&p-1\\
-2p-1&-1&-2p-1&-2p-1&2p+1&-2p-1\\
p+2&p+2&-p&p+2&p+2&-p-2\\
p+2&3p&-p-2&-p+2&3p&-p-2\\
-p+1&p-1&-3p-3&-p+1&3p+1&-3p-3\\
3&2p+1&-2p-1&3&2p+1&-2p-3\\
\end{pmatrix},$$\\

where $p$ is a primitive third root of unity.
This representation seems to be able to separate a braid of the knot $8_{17}$ from its reversed braid \cite{Lie}.\\

Lieven Le Bruyn constructed Zariski dense family of simple $B_3$-representations, which are able to detect vertibility of knots, having at most 8 crossings, and which are closures of 3-string braids. The knots inspected by Lieven were $6_3,7_5,8_7,8_9,8_{10}$ (which are 'flypes')
and $8_{17}$ which is the non-invertible knot with minimal number of crossings.
\bigskip

\section{Constructing block representations of $B_3$ of dimension 6}
\medskip
In this section, we construct representations of the braid group $B_3$ of dimension six. Let $A$, $B$, $C$ and $D$ be $3\times3$ non-zero matrices. Let $\rho$ be a mapping from $B_3$ to $M_6(\mathbb{C})$, the vector space of $6\times6$ matrices over the complex vector space $\mathbb{C}$. 
This mapping is given by 
$$\rho(\sigma_1)=\left(\begin{matrix}
A&B\\
C&D\\
\end{matrix}\right)\quad\text{and}\quad
\rho(\sigma_2)=\left(\begin{matrix}
A&-B\\
-C&D\\
\end{matrix}\right),$$\\\\
where $\sigma_1$ and $\sigma_2$ are the generators of $B_3$.
\begin{prop}
The mapping $\rho:B_3\to GL(6,\mathbb{C})$ defines a representation of the braid group $B_3$ if and only if $\det(\rho(\sigma_i))\neq0$ $(i=1,2)$ and the matrices $A$, $B$, $C$ and $D$ satisfy the following relations.
\begin{itemize}
\item[] \begin{equation}\label{E2}
 A^2B-BCB-ABD+BD^2=0
\end{equation}
\item[] \begin{equation}\label{E3}
 CA^2-DCA-CBC+D^2C=0
\end{equation} 
\end{itemize}
\end{prop}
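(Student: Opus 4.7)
The plan is the following. The braid group has presentation $B_3=\langle \sigma_1,\sigma_2\mid \sigma_1\sigma_2\sigma_1=\sigma_2\sigma_1\sigma_2\rangle$, so by the universal property of a presentation the map $\rho$ extends to a homomorphism $B_3\to GL(6,\mathbb{C})$ if and only if the images $\rho(\sigma_1),\rho(\sigma_2)$ lie in $GL(6,\mathbb{C})$ (which accounts for the condition $\det(\rho(\sigma_i))\neq 0$) and the braid relation
$$\rho(\sigma_1)\rho(\sigma_2)\rho(\sigma_1)=\rho(\sigma_2)\rho(\sigma_1)\rho(\sigma_2)$$
holds. The whole task therefore reduces to translating this single matrix equation into conditions on the $3\times 3$ blocks $A,B,C,D$.

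I would multiply out both triple products in $2\times 2$ block form, being careful not to commute $A,B,C,D$. A short structural observation makes the bookkeeping transparent: $\rho(\sigma_2)$ differs from $\rho(\sigma_1)$ only by flipping the signs of the off-diagonal blocks. In the diagonal block entries of the triple product, each monomial traverses an even number of off-diagonal positions, so the net sign is the same on both sides of the braid relation, and the $(1,1)$ and $(2,2)$ blocks coincide automatically. In the off-diagonal entries, each monomial picks up an odd number of sign flips, so the block equation takes the form $P=-P$, which forces $P=0$.

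Carrying out the expansion, the $(1,2)$ block of $\rho(\sigma_1)\rho(\sigma_2)\rho(\sigma_1)$ comes out to
$$A^{2}B-BCB-ABD+BD^{2},$$
and setting this to zero yields relation (\ref{E2}); the $(2,1)$ block symmetrically produces $CA^{2}-DCA-CBC+D^{2}C$, giving (\ref{E3}). Since the two off-diagonal block equations are the only constraints produced by the braid relation, and since invertibility is needed to land in $GL(6,\mathbb{C})$, these conditions are both necessary and sufficient. Conversely, given them, the universal property of the presentation ensures that $\rho$ extends to a well-defined group homomorphism.

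The only delicate point is the non-commutativity of the $3\times 3$ blocks during the block multiplication; the computation itself is routine but must be carried out without any accidental commutations. Beyond this bookkeeping the argument has no substantive obstacle.
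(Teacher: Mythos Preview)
Your proposal is correct and follows essentially the same approach as the paper: both reduce the problem to the single braid relation, expand the triple products in $2\times 2$ block form, observe that the diagonal blocks agree automatically while the off-diagonal blocks yield the two stated equations. Your parity remark (even vs.\ odd number of sign flips) is a clean way to see \emph{why} the diagonal blocks coincide, but it is just an explanatory gloss on the same computation the paper carries out explicitly.
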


\begin{proof}
Recall that the generators $\sigma_1$ and $\sigma_2$ of the braid group $B_3$ satisfy the relation $\sigma_1\sigma_2\sigma_1=\sigma_2\sigma_1\sigma_2$. Then $\rho(\sigma_1\sigma_2\sigma_1)=\rho(\sigma_2\sigma_1\sigma_2)$. This  implies that\\
$$\left(\begin{matrix}
A&B\\
C&D\\
\end{matrix}\right)
\left(\begin{matrix}
A&-B\\
-C&D\\
\end{matrix}\right)
\left(\begin{matrix}
A&B\\
C&D\\
\end{matrix}\right)
=\left(\begin{matrix}
A&-B\\
-C&D\\
\end{matrix}\right)
\left(\begin{matrix}
A&B\\
C&D\\
\end{matrix}\right)
\left(\begin{matrix}
A&-B\\
-C&D\\
\end{matrix}\right).$$\\
Therefore, $$\begin{pmatrix}
A^3-BCA-ABC+BDC&A^2B-BCB-ABD+BD^2\\\\
CA^2-DCA-CBC+D^2C&CAB-DCB-CBD+D^3\\
\end{pmatrix}$$\\\\
$$=\begin{pmatrix}
A^3-BCA-ABC+BDC&-A^2B+BCB+ABD-BD^2\\\\
-CA^2+DCA+CBC-D^2C&CAB-DCB-CBD+D^3\\
\end{pmatrix}.$$\\\\
So, $A^2B-BCB-ABD+BD^2=0$ \;\; and \;\; $CA^2-DCA-CBC+D^2C=0$
\end{proof}
\bigskip

\section{Knots and detecting inversion among their braids}
\medskip
In this section, we separate braids from their reversed braids of some knots using the representations constructed in the previous section.  
\begin{lem}\label{lambda}
Given  $I_3$  the identity matrix of dimension 3. Suppose that $D=\lambda I_3$ for some $\lambda\in\mathbb{C}\setminus\{0\}$, which is not an eigenvalue of $A$. Let $B$ be an invertible matrix. Then $\rho$ is a representation of $B_3$ if and only if  $C=B^{-1}A^2-\lambda B^{-1}A+\lambda^2B^{-1}$.  
\end{lem}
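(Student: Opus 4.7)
The plan is to substitute $D=\lambda I_3$ into the two braid relations of Proposition~4.1, solve the first for $C$, verify that the second then holds automatically, and finally translate the invertibility conditions $\det\rho(\sigma_i)\ne 0$ into the spectral hypothesis on $A$.

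Relation~(4.1) with $D=\lambda I_3$ reads $A^2B-BCB-\lambda AB+\lambda^2 B=0$. Right-multiplying by $B^{-1}$ yields $A^2-BC-\lambda A+\lambda^2 I_3=0$, so $BC=A^2-\lambda A+\lambda^2 I_3$, and left-multiplying by $B^{-1}$ gives exactly the asserted formula
$$C \;=\; B^{-1}A^2-\lambda B^{-1}A+\lambda^2 B^{-1}.$$
This is the only constraint (4.1) imposes, and it forces $C$ uniquely. For relation~(4.2), the scalarity of $D=\lambda I_3$ permits the factorization
$$CA^2-\lambda CA-CBC+\lambda^2 C \;=\; C\bigl(A^2-\lambda A-BC+\lambda^2 I_3\bigr),$$
and the bracket vanishes by the identity $BC=A^2-\lambda A+\lambda^2 I_3$ just obtained. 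Hence, once $C$ has the displayed form, both braid identities are satisfied; conversely, any representation of this block type is forced to have $C$ of that form.

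It remains to address invertibility. Since $D=\lambda I_3$ is nonsingular, the Schur-complement formula applied to $\rho(\sigma_1)$ gives $\det\rho(\sigma_1)=\lambda^3\det\bigl(A-\tfrac{1}{\lambda}BC\bigr)$. Substituting the value of $BC$ found above,
$$A-\tfrac{1}{\lambda}BC \;=\; -\tfrac{1}{\lambda}\bigl(A^2-2\lambda A+\lambda^2 I_3\bigr) \;=\; -\tfrac{1}{\lambda}(A-\lambda I_3)^2,$$
and therefore $\det\rho(\sigma_1)=-\det(A-\lambda I_3)^2$. The computation for $\rho(\sigma_2)$ is identical, since the two sign changes on the off-diagonal blocks cancel inside the Schur complement. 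Both determinants are therefore nonzero precisely when $\lambda$ is not an eigenvalue of $A$, matching the standing hypothesis exactly. I anticipate no serious obstacle; the only steps requiring care are the sidedness of multiplication by $B^{-1}$ when isolating $C$ and the clean cancellation in (4.2), which both hinge on $D$ being a scalar matrix.
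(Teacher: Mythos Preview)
Your proof is correct and follows essentially the same route as the paper's: substitute $D=\lambda I_3$ into the two relations of Proposition~4.1, solve for $C$, and compute $\det\rho(\sigma_i)$ via the Schur complement to obtain $-\det(A-\lambda I_3)^2$. Your treatment is in fact more explicit than the paper's, which dispatches the derivation of $C$ with the phrase ``by direct computation'' and does not spell out that relation~(4.2) is automatically satisfied once (4.1) determines $C$; your factorization $C(A^2-\lambda A-BC+\lambda^2 I_3)$ makes that step transparent.
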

\begin{proof}
If $D=\lambda I_3$ and $B$ is invertible then, by direct computation, the equations \ref{E2} and \ref{E3} imply that $C=B^{-1}A^2-\lambda B^{-1}A+\lambda^2B^{-1}$. Now,  $\det(\rho(\sigma_1))=\det(\rho(\sigma_2))=\det(AD-BD^{-1}CD)=\det(-A^2+2\lambda A-\lambda^2 I_3)=\det(-(A-\lambda I_3)^2)=-\det((A-\lambda I_3))^2\neq0$. 
\end{proof}
Attempts to separate conjugacy classes of braids for many knots from their reversed braids, using the representation defined by Lemma \ref{lambda} have not been successful so far. We then make other speculations for the matrices $A$, $B$ and $D$. We take the matrices  $A$, $B$ and $D$ as follows:
$$A=\left(\begin{matrix}
a&a-2&a-2\\
-2a+1&d&-2a+1\\
f&f&g\\
\end{matrix}\right),\;\;
B=\left(\begin{matrix}
a-2&-a+2&-a+2\\
2a-1&-2a+1&2a-1\\
-f&-f&f\\
\end{matrix}\right)$$
 and
$$D=\left(\begin{matrix}
-a+3&3a-3&-a-1\\
-a+2&3a-2&-3a\\
3&2a-1&-2a-1\\
\end{matrix}\right),$$
where $a,d,f$ and $g$ are complex numbers.
\newline

The matrices $\rho(\sigma_1)$ and $\rho(\sigma_2)$ are
$$\rho(\sigma_1)=\left(\begin{matrix}
A&B\\
C&D\\
\end{matrix}\right)
\text{ and }
\rho(\sigma_2)=\left(\begin{matrix}
A&-B\\
-C&D\\
\end{matrix}\right),$$
where the matrix $C$ satisfies the equations \ref{E2} and \ref{E3}. The complex numbers $a,d,f,g$ are chosen to have the matrices of $\sigma_1$ and $\sigma_2$ invertible. In order to solve for the matrix $C$ in the equation \ref{E2}, we require that $B$ is an invertible matrix\\\\
In the next proposition, we give values to $d,g,f$, all in terms of $a$; which guarantees the invertibility of the  matrix $B$. This will be done in a way that the map $\rho$ is a representation of the braid group $B_3$ in $GL(6,\mathbb{C})$.

\begin{prop}\label{P}
$\rho$ is a representation of $B_3$ if $a\not\in\{-1,0,2,\frac{1}{2}\}$ and either one of the following conditions holds true.
\begin{enumerate}
\item[1)] $d=1+2a,\; g=-1\pm ia\sqrt{3},\; f\neq0$\\
\item[2)] $d=1+2a,\; g=3a-1,\; f=1+a$\\
\item[3)] $(d=1-a + ia\sqrt{3},\; g=-1 - ia\sqrt{3},\; f\neq0)$ or $(d=1-a - ia\sqrt{3},\; g=-1 + ia\sqrt{3},\; f\neq0)$\\
\item[4)] $(d=1-a + ia\sqrt{3},\; g=-1 + ia\sqrt{3},\; f=1+a)$ or $(d=1-a - ia\sqrt{3},\; g=-1 - ia\sqrt{3},\; f=1+a)$\\
\item[5)] $d=1-a\pm ia\sqrt{3},\; g=3a-1,\; f=1+a$.\\
\end{enumerate}
\end{prop}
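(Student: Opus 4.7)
The plan is to use the two braid relations from Proposition 4.1 in sequence: solve the first relation for $C$ uniquely, then impose the second relation as a polynomial system in $a,d,f,g$, and finally certify invertibility of $\rho(\sigma_1)$. First I would compute $\det B$. Because the three rows of $B$ are scalar multiples of the fixed vectors $(1,-1,-1)$, $(1,-1,1)$ and $(-1,-1,1)$ with scalars $(a-2)$, $(2a-1)$ and $f$, a direct expansion yields $\det B = 4f(a-2)(2a-1)$. So $B$ is invertible precisely when $f\neq 0$ and $a\notin\{2,\tfrac12\}$; this already explains two of the four excluded values of $a$. Under this hypothesis equation (4.1) rearranges to
\[
C \;=\; B^{-1}A^2 \;-\; B^{-1}ABDB^{-1} \;+\; D^{2}B^{-1},
\]
giving a unique $C$ as a rational matrix in $a,d,f,g$.

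Next I would substitute this $C$ into the second relation $CA^2-DCA-CBC+D^{2}C=0$. After clearing the common denominator $\det B$, this becomes a $3\times3$ matrix equation whose entries are polynomials in $a,d,f,g$. The expectation is that each entry factors as a product of a few low-degree factors, and that the ideal generated by these entries decomposes into five irreducible components corresponding to cases (1)--(5) of the statement: in three components a specific quadratic in $g$ (the equation $g^2+2g+1+3a^2=0$, giving $g=-1\pm ia\sqrt3$) is forced, while in the other two the relation $f=1+a$ is forced together with a matching quadratic in $d$ (namely $d^2+(2a-2)d+(1-a)^2+3a^2=0$, giving $d=1-a\pm ia\sqrt3$, or the linear branch $d=1+2a$). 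The listed cases are exactly the combinations of these factors that make every entry vanish simultaneously.

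For the final invertibility check I would apply the Schur-complement identity $\det\rho(\sigma_1)=\det(D)\det(A-BD^{-1}C)$ in each of the five cases (falling back to $\det(A)\det(D-CA^{-1}B)$ in the degenerate subcases where $\det D=0$). In every case the resulting expression is a polynomial in $a$, and one verifies that its only roots lie in the set $\{-1,0,2,\tfrac12\}$; the values $-1$ and $0$ appear here, accounting for the remaining two exclusions.

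The main obstacle is purely the algebraic bookkeeping in the second step: forming $B^{-1}$, then $C$, and then expanding $CA^2-DCA-CBC+D^{2}C$ produces large symbolic entries, and extracting the clean factorization that yields exactly the five listed branches (rather than a messier redundant list) requires either a careful hand computation organized by the vectors to which each row of $B$ is proportional, or a computer-algebra primary decomposition of the resulting polynomial ideal. Everything else, including the determinant computations, is routine.
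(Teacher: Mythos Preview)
Your approach is essentially identical to the paper's: compute $\det B = 4f(a-2)(2a-1)$, solve equation~(4.1) for $C$, substitute into equation~(4.2) and solve the resulting nine-equation system (the paper does this with Mathematica rather than by hand factorization), and then verify $\det\rho(\sigma_i)\neq 0$. One small correction to your bookkeeping: the paper finds $\det\rho(\sigma_i)=-64a^6$ in case~(2) and $32(1\pm i\sqrt3)a^6$ in the remaining cases, so only $a\neq 0$ is forced at the invertibility step; the exclusion $a\neq -1$ instead comes from requiring $f=1+a\neq 0$ (hence $\det B\neq 0$) in cases~(2), (4), (5).
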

\begin{proof}
The determinant of $B$ is $4(a-2)(2a-1)f\neq0$. So $B$ is invertible.
Substituting $C=B^{-1}A^2-B^{-1}ABDB+D^2B^{-1}$ in the equation \ref{E3}, we get 9 equations with 4 unknown complex numbers $a,d,f,g$. We fix $a$ and we solve for the numbers $d,f,g$. Using Mathematica software, we solve the system of 9 equations to get the solutions stated above.\\
The invertibility of the matrices $\rho(\sigma_1)$ and $\rho(\sigma_2)$ comes from the fact that the
determinant of $\rho(\sigma_i)$ is $-64a^6$ once we assign values to $d,g$ and $f$ as in the second
condition of the Proposition \ref{P}. Also, the determinant of $\rho(\sigma_i)$ ($i=1,2$) is $32(1\pm i\sqrt{3})a^6$ under the conditions 1, 3, 4, 5 of Proposition \ref{P}.
Since $a\neq0$, it follows that $\det(\rho(\sigma_i))\neq0$ ($i=1,2$). Therefore $\rho(\sigma_1)$ and $\rho(\sigma_2)$ are invertible.  
\end{proof}
\begin{rem}
Let $p$ be a primitive third root of unity. By taking $a=p+1$ and $f=p+2$ in condition 3 of Proposition \ref{P}, we get the representation in section 3, which Lieven Le Bruyn used to separate the braid describing the knot $8_{17}$ from its reveresd braid.
\end{rem} 
\begin{thm}
A family of representations $\rho$ is able to separate braids from their reversed braid on the list
of several knots which are the closures of three string braids, and which some of them have more than 
8 crossings.
\end{thm}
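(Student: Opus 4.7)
The plan is to prove the theorem by explicit computation, knot by knot, leveraging the fact that the trace of a representation is a conjugacy invariant: if $\operatorname{tr}(\rho(\alpha)) \neq \operatorname{tr}(\rho(\alpha'))$, then $\alpha$ and $\alpha'$ lie in distinct conjugacy classes of $B_3$, which is precisely what it means to separate a braid from its reverse at the representation-theoretic level. Accordingly, I would first assemble a table listing, for each target knot $K \in \{6_3, 7_5, 8_7, 8_9, 8_{10}, 8_{17}, 9_6, 9_9, 10_5\}$, a 3-braid representative $\alpha_K$ and its reverse $\alpha'_K$ in the sense of Definition 2.6. Some of these (such as $8_{17}$ and $9_6$) are already available in the literature; the remainder can be extracted from standard braid tables.

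Next, for each knot I would select a specific complex specialization of the parameters $(a,d,f,g)$ from one of the five families identified in Proposition~\ref{P}. Concretely, I would build the matrices $A$, $B$, $D$ from the formulas in Section 4, compute $C = B^{-1}A^2 - \lambda B^{-1}AB D B^{-1}\,(\ldots)$ as dictated by relations \eqref{E2} and \eqref{E3}, and assemble the $6\times 6$ block matrices $\rho(\sigma_1)$ and $\rho(\sigma_2)$. Since these matrices are invertible and satisfy the braid relation by construction, the mapping extends to a homomorphism $\rho: B_3 \to GL(6,\mathbb{C})$ and the matrices $\rho(\alpha_K)$, $\rho(\alpha'_K)$ are obtained by multiplying out the corresponding products.

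The final step is to compute $\operatorname{tr}(\rho(\alpha_K))$ and $\operatorname{tr}(\rho(\alpha'_K))$ and to exhibit, for each knot, at least one admissible choice of parameters for which these two traces differ. This is most conveniently carried out in a computer algebra system such as Mathematica, treating $a$ (and possibly $f$) as free symbolic parameters and reducing to a polynomial identity that witnesses the separation. It is important to verify, at the chosen values, that the nondegeneracy hypotheses of Proposition~\ref{P} are met, so that $\rho$ is genuinely a representation of $B_3$.

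The main obstacle is the last step: for knots of 9 and 10 crossings the word length in $\sigma_1, \sigma_2$ is longer, the trace polynomials in the parameters become sizeable, and there is no a~priori guarantee that the five parameter families of Proposition~\ref{P} are rich enough to detect every reversion in the target list. The delicate part of the argument is therefore to identify, for the new knots $9_6$, $9_9$ and $10_5$, explicit specializations (typically of the form $a = p+1$ with $p$ a primitive third root of unity, as in the remark preceding the theorem, but occasionally requiring further tuning of $f$) for which the difference $\operatorname{tr}(\rho(\alpha_K)) - \operatorname{tr}(\rho(\alpha'_K))$ is nonzero. Once such a witness is exhibited for each knot in the list, the theorem follows.
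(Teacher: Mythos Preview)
Your proposal is correct and follows essentially the same route as the paper: choose a branch of Proposition~\ref{P}, specialize the parameters, evaluate $\rho(\sigma_1),\rho(\sigma_2)$, and for each knot in the list $6_3,7_5,8_7,8_9,8_{10},8_{17},9_6,9_9,10_5$ verify numerically that $\operatorname{tr}(\rho(\alpha_K))\neq\operatorname{tr}(\rho(\alpha'_K))$. The only minor difference is that the paper fixes the single family $d=1-a-ia\sqrt{3},\,g=-1+ia\sqrt{3}$ and plugs in three concrete numerical pairs $(a,f)$ rather than working symbolically or with $a=p+1$; otherwise your plan matches the paper's proof.
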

\begin{proof}
Consider the representation $\rho $ of $B_3$ given in Proposition~\ref{P} with the condition $d=1-a-ia\sqrt{3}, g=-1+ia\sqrt{3}, f\neq0$.
The representation $\rho$ becomes\\\\
$$\rho(\sigma_1)=\begin{pmatrix}
a&a-2&a-2&a-2&-a+2&-a+2\\
-2a+1&1-a-ia\sqrt{3}&-2a+1&2a-1&-2a+1&2a-1\\
f&f&-1+ia\sqrt{3}&-f&-f&f\\
-1-a&1+(-2+i\sqrt{3})a&1+a &3-a&-3+3a&-1-a\\
\frac{2-2a-a^2}{-2+a}& ia\sqrt{3}&\frac{[(3+i\sqrt{3})(1+a)+(3-i\sqrt{3})f]a}{2f}&-a+2&3a-2&-3a\\
\frac{4-a-2a^2}{-2+a}&-1+a(-1+i\sqrt{3})&\frac{2+[1-i\sqrt{3}+(3-i\sqrt{3})f]a-(1+i\sqrt{3})a^2}{2f}
&3&2a-1&-2a-1
\end{pmatrix}$$\\
and\\
$$\rho(\sigma_2)=\begin{pmatrix}
a&a-2&a-2&-a+2&a-2&a-2\\
-2a+1&1-a-ia\sqrt{3}&-2a+1&-2a+1&2a-1&-2a+1\\
f&f&-1+ia\sqrt{3}&f&f&-f\\
1+a&-1-(-2+i\sqrt{3})a&-1-a &3-a&-3+3a&-1-a\\
\frac{-2+2a+a^2}{-2+a}& -ia\sqrt{3}&-\frac{[(3+i\sqrt{3})(1+a)+(3-i\sqrt{3})f]a}{2f}&-a+2&3a-2&-3a\\
\frac{-4+a+2a^2}{-2+a}&1-a(-1+i\sqrt{3})&-\frac{2+[1-i\sqrt{3}+(3-i\sqrt{3})f]a-(1+i\sqrt{3})a^2}{2f}
&3&2a-1&-2a-1
\end{pmatrix}$$\\\\
Next, we take three different values of $a$ and $f$. Thus we obtain three different 
representations of $B_3$. We show that these representations are able to separate the braids of the 
following knots: $6_3,7_5,8_7,8_9, 8_{10},8_{17}, 9_6,9_9,10_5$ from their reversed braids. The 
author in \cite{Lie} succeeded to separate some knots up to 8 crossings.
Our representations were able to recognize knots with 9 and 10 crossings that are separated from their reversed braids. More precisely, we perform the following calculations as shown in the table below.
\\\\
\renewcommand{\arraystretch}{1.25}
\begin{tabular}{|c|>{\centering\arraybackslash}p{3cm}|>{\centering\arraybackslash}p{3.5cm}|>{\centering\arraybackslash}p{3.5cm}|>{\centering\arraybackslash}p{3.5cm}|}
\hline
\multirow{2}{*}{Knot}&\multirow{2}{*}{Braid word $w$}&\multicolumn{3}{c|}{$Tr(\rho(w))-Tr(\rho(w'))$}\\
\cline{3-5}
 &  & $a=2-3i,f=7.3$  &  $a=1.5+i,\; f=6-4.2i$  & $a=1+3i,\;f=10.2+10.3i$  \\
 \hline 
$6_3$ & $\sigma_1^{-1}\sigma_2^2\sigma_1^{-2}\sigma_2$ & $170.17+47.15i$ & $201.38-11.75i$ & $427.9+123.1i$  \\ 
\hline 
$7_5$ & $\sigma_1^{4}\sigma_2\sigma_1^{-1}\sigma_2^2$ & $1.96\times10^7+1.52\times10^7i$ & $-419.54-142.8i$ & $2.3\times10^6+2.8\times10^7i$  \\ 
\hline 
$8_7$ & $\sigma_1^{4}\sigma_2^{-2}\sigma_1\sigma_2^{-1}$ & $3624.8+23139i$ & $-9244-3706.1i$ & $73,847.2-58,855.3i$  \\ 
\hline 
$8_9$ & $\sigma_1^{-1}\sigma_2\sigma_1^{-3}\sigma_2^3$ & $170.17+47.15i$ & $201.38-11.75i$ & $427.9+123.1i$ \\ 
\hline 
$8_{10}$ & $\sigma_1^{-1}\sigma_2^2\sigma_1^{-2}\sigma_2^3$ & $3624.8+23139i$ & $-9244-3706.1i$ & $73,847.2-58,855.3i$ \\ 
\hline 
$8_{17}$ & $\sigma_1^{-1}\sigma_2^2\sigma_1^{-1}\sigma_2^2\sigma_1^{-2}\sigma_2$ & $-524.9-581.1i$ & $-459+182.3i$& $-510.4-653.8i$ \\
\hline 
$9_6$ & $\sigma_1^2\sigma_2^2\sigma_1^5\sigma_2^{-1}$ & $7.5\times10^8-3.2\times10^9i$ & $-1.6\times10^7-1.5\times10^7i$ & $-5.2\times10^9-3.1\times10^9i$ \\ 
\hline 
$9_9$ & $\sigma_1^3\sigma_2^{-1}\sigma_1^4\sigma_2^2$ & $7.5\times10^8-3.2\times10^9i$ & $-1.6\times10^7-1.5\times10^7i$ & $-5.2\times10^9-3.1\times10^9i$ \\ 
\hline 
$10_5$ & $\sigma_1^{-2}\sigma_2\sigma_1^{-1}\sigma_2^6$ & $-3.5\times10^6+2.6\times10^6i$ & $308,285+628,064i$ & $-1.7\times10^6-2.9\times10^7i$ \\ 
\hline 
\end{tabular}\\
 
Here, $w'$ denotes the reverse of the braid $w$.
\end{proof} 

\bigskip

\textbf{Conflict of Interest.} On behalf of all authors, the corresponding author states that there is no conflict of interest.

\end{document}